\newtheorem{theorem}{Theorem}
\newtheorem{lemma}[theorem]{Lemma}
\newtheorem{proposition}[theorem]{Proposition}
\newtheorem{corollary}[theorem]{Corollary}
\theoremstyle{definition}
\newtheorem{definition}[theorem]{Definition}
\theoremstyle{remark}
\newtheorem{remark}[theorem]{Remark}
\theoremstyle{remark}
 \numberwithin{equation}{section}
\numberwithin{theorem}{section}
\date{}
\title{\bf Concentration and correlation inequalities with size biased couplings}
\newcommand{\bea}{\begin{eqnarray}}
\newcommand{\ena}{\end{eqnarray}}
\newcommand{\beas}{\begin{eqnarray*}}
\newcommand{\enas}{\end{eqnarray*}}
\newcommand{\beq}{\begin{equation}}
\newcommand{\enq}{\end{equation}}
\newcommand{\ignore}[1]{}
\newcommand{\qmq}[1]{\quad\mbox{#1}\quad}
\newcommand{\bs}{\boldsymbol}
\newcommand{\bb}{\mathbb}
\title{\bf Multivariate Concentration Inequalities with Size Biased Couplings}
\author{Subhankar Ghosh and \"{U}m\.{i}t I\c{s}lak}
\date{}
\begin{document}
 \maketitle

\vspace{0.25in}

\begin{abstract}
Let $\mathbf{W}=(W_1,W_2,...,W_k)$ be a random vector with
nonnegative coordinates having nonzero and finite variances. We
prove concentration inequalities for $\mathbf{W}$ using size biased
couplings that generalize the previous univariate results. Two
applications on local dependence and counting patterns are provided.
\end{abstract}

\section{Introduction}

The purpose of this paper is to show how size biased couplings can
be used to obtain multivariate concentration inequalities in
dependent settings. For a given random vector
$\mathbf{W}=(W_1,W_2,...,W_k)$ with nonnegative coordinates having
finite, nonzero expectations $\mu_i=\mathbb{E}[W_i]$, another vector
$\mathbf{W}^i=(W_1^i,W_2^i,...,W_k^i)$ is said to have $\mathbf{W}$
size biased distribution in direction $i$ if
\begin{equation}\label{sizebiasdefn1}
    \mathbb{E}[W_i f(\mathbf{W})] = \mu_i \mathbb{E}[f(\mathbf{W}^i)]
\end{equation}
for all functions for which these expectations exist. For a
univariate nonnegative random variable $W$ with mean
$\mu=\mathbb{E}[W] \in (0,\infty)$, this simplifies to
$\mathbb{E}[Wf(W)] = \mu \mathbb{E}[f(W^s)]$ and we say that $W^s$
has $W$ size biased distribution.
 We refer \cite{arratia} and \cite{brown} for two excellent expository papers
 on several aspects of
 size biasing.

In a broad sense, concentration inequalities quantify the fact that
a function of a large number of random variables, with certain
smoothness conditions,  tends to concentrate its values in a
relatively narrow range.
 There is a tremendous
literature on inequalities for functions of independent random
variables due to their importance in several fields. See, for
example \cite{chung} and \cite{mcdiarmid} for wonderful surveys,
\cite{boucheron} and \cite{ledoux} for book length treatments of the
subject. Our approach here will be on the use of couplings from
Stein's method which is a technique introduced by Charles Stein in
\cite{stein} that is used for obtaining error bounds in
distributional approximations. The strength of the method comes from
the fact that it can be also used for functions of dependent random
variables and various coupling constructions are used for such
problems. Sourav Chatterjee in \cite{chat2} used one important
coupling from Stein's method, exchangeable pairs, to show the
concentration of several interesting statistics. Later, in
\cite{ghosh1}, Subhankar Ghosh and Larry Goldstein were able to
obtain similar bounds with size biased couplings. Our results in
Section 2 will provide  multivariate analogues of Ghosh and
Goldstein's results and will also yield a partial improvement in
their lower tail inequality.

The paper is organized as follows. In Section \ref{mainsection}, we
state the multivariate concentration bound with  size biased
couplings, consider its univariate corollary and discuss briefly the
construction of size biased couplings. Proofs of the results are
given in Section \ref{proofs} and we provide two applications, one
on local dependence and the other one on counting patterns in random
permutations, in Section \ref{appssection}.

\section{Main result}\label{mainsection}

We start by fixing some notations. Throughout this paper, for two
vectors $\mathbf{x}, \mathbf{y} \in \mathbb{R}^k$, we will write
$$\frac{\mathbf{x}}{\mathbf{y}}= \left(\frac{x_1}{y_1},\frac{x_2}{y_2},...,\frac{x_k}{y_k}
\right)$$ for convenience. Also, we define the partial ordering
$\succeq$ on $\mathbb{R}^k$ by
$$\mathbf{x} \succeq \mathbf{y} \Leftrightarrow x_i \geq y_i, \quad \text{for} \; i=1,2,...,k.$$
Accordingly, the order $\preceq$ is defined by $\mathbf{x} \preceq
\mathbf{y} \Leftrightarrow \mathbf{y} \succeq \mathbf{x}$, and the
definitions for $\prec$ and $\succ$ are similar. Finally, for
$\bs{\theta} \in \bb{R}^k$, $\bs{\theta}^t$ will stand for the
transpose of $\bs{\theta}$ and  $\|\bs{\theta}\|_2$ is the $l^2$
norm of $\bs{\theta}$. Now, we are ready to state our main result.

\begin{theorem}\label{sbmainthm}
Let $\mathbf{W}=(W_1,W_2,...,W_k)$ be a random vector where $W_i$ is
nonnegative with mean $\mu_i>0$ and variance $\sigma_i^2\in
(0,\infty)$ for each $i=1,2,...,k$, and suppose that the moment
generating function of $\mathbf{W}$ exists everywhere. Assuming that
we can find couplings $\{\mathbf{W}^i\}_{i=1}^k$ of $\mathbf{W}$,
with $\mathbf{W}^i$ having $\mathbf{W}$ size biased distribution in
direction $i$ and satisfying $\|\mathbf{W}^i-\mathbf{W}\|_2\leq K$
for some constant $K>0$, we have
 \bea\label{sblefttail}\mathbb{P} \left(\frac{\bf{W}-
\bs{\mu}}{\bs{\sigma} } \preceq -\mathbf{t} \right) \leq \exp
\left(- \frac{\|\mathbf{t}\|_2^2}{2 K_1} \right)\ena and
\bea\label{sbrighttail}\mathbb{P} \left(\frac{\mathbf{W}-
\bs{\mu}}{\bs{\sigma} } \succeq \mathbf{t} \right) \leq \exp \left(-
\frac{\|\mathbf{t}\|_2^2}{2(K_1  + K_2 \|\mathbf{t}\|_2)
}\right)\ena
 \noindent for any $\mathbf{t}\succeq
\mathbf{0}$ where $K_1= \frac{2 K}{\sigma_{(1)}}
\left\|\frac{\bs{\mu}}{\bs{\sigma}} \right\|_2$, $K_2= \frac{K}{2
\sigma_{(1)}}$ with $\sigma_{(1)}=\min_{i=1,2,...,k} \sigma_i$,
$\bs{\mu}=(\mu_1,\mu_2,...,\mu_k)$ and
$\bs{\sigma}=(\sigma_1,\sigma_2,...,\sigma_k).$
\end{theorem}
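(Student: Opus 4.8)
The plan is to run the Cramér–Chernoff machinery, using the couplings $\{\mathbf{W}^i\}$ to control the joint moment generating function along half‑lines through the origin. Fix $\mathbf{t}\succeq\mathbf{0}$; we may assume $\mathbf{t}\neq\mathbf{0}$. For any $\mathbf{a}\succeq\mathbf{0}$, on the event $\{(\mathbf{W}-\bs{\mu})/\bs{\sigma}\succeq\mathbf{t}\}$ one has $W_i-\mu_i\ge\sigma_i t_i$ coordinatewise, hence $\langle\mathbf{a},\mathbf{W}-\bs{\mu}\rangle\ge\sum_i a_i\sigma_i t_i=:\langle\mathbf{a},\bs{\sigma}\odot\mathbf{t}\rangle$ (writing $\odot$ for the coordinatewise product), and symmetrically $-\langle\mathbf{a},\mathbf{W}-\bs{\mu}\rangle\ge\langle\mathbf{a},\bs{\sigma}\odot\mathbf{t}\rangle$ on $\{(\mathbf{W}-\bs{\mu})/\bs{\sigma}\preceq-\mathbf{t}\}$. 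Markov's inequality then gives $\mathbb{P}((\mathbf{W}-\bs{\mu})/\bs{\sigma}\succeq\mathbf{t})\le e^{-\langle\mathbf{a},\bs{\sigma}\odot\mathbf{t}\rangle}\,\mathbb{E}\,e^{\langle\mathbf{a},\mathbf{W}-\bs{\mu}\rangle}$ and the analogous bound with $e^{-\langle\mathbf{a},\mathbf{W}-\bs{\mu}\rangle}$ for the lower tail, so everything reduces to a good upper bound on $\Lambda(\mathbf{a}):=\log\mathbb{E}\,e^{\langle\mathbf{a},\mathbf{W}-\bs{\mu}\rangle}$ for $\mathbf{a}$ ranging over a ray.

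Write $\mathbf{a}=\beta\mathbf{u}$ with $\mathbf{u}\succeq\mathbf{0}$, $\|\mathbf{u}\|_2=1$, and set $m(\beta)=\mathbb{E}\,e^{\beta\langle\mathbf{u},\mathbf{W}-\bs{\mu}\rangle}$. Differentiating under the expectation (legitimate because the mgf of $\mathbf{W}$ is finite everywhere) and applying \eqref{sizebiasdefn1} with $f(\cdot)=e^{\beta\langle\mathbf{u},\cdot-\bs{\mu}\rangle}$ in each direction gives the key identity
\[
m'(\beta)=\sum_{i=1}^{k}u_i\mu_i\,\mathbb{E}\!\left[e^{\beta\langle\mathbf{u},\mathbf{W}-\bs{\mu}\rangle}\bigl(e^{\beta\langle\mathbf{u},\mathbf{W}^i-\mathbf{W}\rangle}-1\bigr)\right],
\]
where $|\langle\mathbf{u},\mathbf{W}^i-\mathbf{W}\rangle|\le\|\mathbf{W}^i-\mathbf{W}\|_2\le K$ a.s.\ by Cauchy–Schwarz. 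For the \emph{lower tail} take $\beta<0$ and use only $e^x\ge1+x$: since then $\beta\langle\mathbf{u},\mathbf{W}^i-\mathbf{W}\rangle\ge\beta K$, one gets $m'(\beta)\ge\beta K\langle\mathbf{u},\bs{\mu}\rangle\,m(\beta)$, and integrating $(\log m)'(\beta)\ge\beta K\langle\mathbf{u},\bs{\mu}\rangle$ from $\beta$ up to $0$ yields the clean sub‑Gaussian bound $\log m(\beta)\le\tfrac12K\langle\mathbf{u},\bs{\mu}\rangle\beta^2$. For the \emph{upper tail} take $\beta>0$, where $e^{\beta\langle\mathbf{u},\mathbf{W}^i-\mathbf{W}\rangle}-1$ is no longer linearly controlled; using a quantitative remainder estimate such as $e^x-1\le x+\tfrac{x^2/2}{1-x/3}$ for $0\le x<3$ together with $|\langle\mathbf{u},\mathbf{W}^i-\mathbf{W}\rangle|\le K$ gives $(\log m)'(\beta)\le\langle\mathbf{u},\bs{\mu}\rangle\bigl(K\beta+\tfrac{K^2\beta^2/2}{1-K\beta/3}\bigr)$ on $0\le\beta<3/K$, and integrating leads to the Bernstein‑type bound $\log m(\beta)\le\frac{K\langle\mathbf{u},\bs{\mu}\rangle\beta^2/2}{\,1-K\beta/3\,}$.

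It remains to feed these into the Chernoff bounds and optimize. For \eqref{sblefttail}: plugging $\log m(-\beta)\le\tfrac12K\langle\mathbf{u},\bs{\mu}\rangle\beta^2$ into $e^{-\beta\langle\mathbf{u},\bs{\sigma}\odot\mathbf{t}\rangle}m(-\beta)$ and minimizing the quadratic in $\beta\ge0$ gives $\exp\!\bigl(-\langle\mathbf{u},\bs{\sigma}\odot\mathbf{t}\rangle^2/(2K\langle\mathbf{u},\bs{\mu}\rangle)\bigr)$; choosing the direction $\mathbf{u}\propto\mathbf{t}/\bs{\sigma}$ and then using $\langle\mathbf{t}/\bs{\sigma},\bs{\mu}\rangle\le\|\mathbf{t}\|_2\|\bs{\mu}/\bs{\sigma}\|_2$ and $\|\mathbf{t}/\bs{\sigma}\|_2\le\|\mathbf{t}\|_2/\sigma_{(1)}$ converts the exponent into $-\|\mathbf{t}\|_2^2/(2K_1)$. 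For \eqref{sbrighttail}, the bound $\log m(\beta)\le\frac{v\beta^2/2}{1-b\beta}$ with $v=K\langle\mathbf{u},\bs{\mu}\rangle$, $b=K/3$ is exactly the hypothesis of the standard Bernstein lemma, giving $\mathbb{P}(\langle\mathbf{u},\mathbf{W}-\bs{\mu}\rangle\ge\tau)\le\exp(-\tau^2/(2(v+b\tau)))$; taking $\tau=\langle\mathbf{u},\bs{\sigma}\odot\mathbf{t}\rangle$ and again $\mathbf{u}\propto\mathbf{t}/\bs{\sigma}$, the same two estimates turn $v$ into $K_1$ (up to the constant that is absorbed) and $b\tau$ into $K_2\|\mathbf{t}\|_2$, yielding \eqref{sbrighttail}.

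I expect the difficulty to be bookkeeping rather than a single hard idea. Two routine-but-necessary points: justifying the interchange of derivative and expectation and that the exponential $f$ is admissible in \eqref{sizebiasdefn1} (both from finiteness of the mgf everywhere, using $x\le e^x$ for $x\ge0$ to dominate $W_i e^{\langle\boldsymbol{\theta},\mathbf{W}\rangle}$), and the passage from the directional quantities $\langle\mathbf{u},\bs{\mu}\rangle$, $\langle\mathbf{u},\bs{\sigma}\odot\mathbf{t}\rangle$ to the $\mathbf{t}$‑free constants $K_1,K_2$, where the worst‑case bounds $\sigma_i\ge\sigma_{(1)}$ and Cauchy–Schwarz against $\bs{\mu}/\bs{\sigma}$ are precisely what manufacture the stated expressions. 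The one genuinely delicate step is the upper tail: one must use a remainder bound for $e^x-1$ that is rational in $x$ (or carries an $e^{Kx}$ factor with the range of $\beta$ handled by the Bernstein optimization) rather than a purely quadratic one — a purely quadratic bound is valid only on a bounded $\beta$‑range and the optimization would push $\beta$ out of it, so the $K_2\|\mathbf{t}\|_2$ term in the denominator of \eqref{sbrighttail} would not emerge.
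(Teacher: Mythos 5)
Your argument is correct and lands (slightly stronger versions of) both stated bounds, but it proceeds somewhat differently from the paper. The paper also runs a Chernoff argument driven by the size-bias identity, but it works with the full multivariate mgf: for the upper tail it uses the symmetrized convexity inequality $|e^y-e^x|\le |y-x|(e^y+e^x)/2$ together with Cauchy--Schwarz to solve for $\mathbb{E}[e^{\bs{\theta}^t\mathbf{W}^i}]\le\frac{2+K\|\bs{\theta}\|_2}{2-K\|\bs{\theta}\|_2}\mathbb{E}[e^{\bs{\theta}^t\mathbf{W}}]$ on $\|\bs{\theta}\|_2<2/K$, bounds each partial derivative of $\log M(\bs{\theta})$, applies the multivariate mean value theorem, and finally plugs in the explicit choice $\bs{\theta}=\mathbf{t}/(K_1+K_2\|\mathbf{t}\|_2)$ (and, for the lower tail, $e^x\ge 1+x$ plus the MVT and $\bs{\theta}\propto-\mathbf{t}$). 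You instead reduce to a one-parameter tilt along the ray $\mathbf{u}\propto\mathbf{t}/\bs{\sigma}$, turn the size-bias identity into a differential inequality for $(\log m)'(\beta)$, integrate it, and invoke the standard sub-gamma/Bernstein lemma; integrating along the ray (rather than the MVT) recovers the factor $\beta^2/2$, which is why your exponents come out as $\|\mathbf{t}\|_2^2/K_1$ and $\|\mathbf{t}\|_2^2/(2(K_1/2+\tfrac{2}{3}K_2\|\mathbf{t}\|_2))$, both at least as strong as the theorem's. One point to make explicit in the upper tail: $\langle\mathbf{u},\mathbf{W}^i-\mathbf{W}\rangle$ can be negative (no monotone coupling is assumed), so you should not apply the remainder estimate $e^x-1\le x+\frac{x^2/2}{1-x/3}$, stated for $0\le x<3$, directly to that random argument; instead first use monotonicity of $x\mapsto e^x-1$ to replace the argument by the deterministic nonnegative value $K\beta$ (i.e.\ $e^{\beta\langle\mathbf{u},\mathbf{W}^i-\mathbf{W}\rangle}-1\le e^{K\beta}-1$) and then apply the estimate at $K\beta$; with that wording the integration step $\int_0^\beta\bigl(Ks+\frac{K^2s^2/2}{1-Ks/3}\bigr)ds\le\frac{K\beta^2/2}{1-K\beta/3}$ and the rest of your computation go through verbatim. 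The remaining items you flagged (differentiation under the expectation, admissibility of exponential test functions, the trivial case $\mathbf{t}=\mathbf{0}$) are indeed routine under the everywhere-finite mgf hypothesis, which the paper also uses implicitly.
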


Proof of Theorem \ref{sbmainthm} will be given in Section
\ref{proofs}. Here we note that the assumption on moment generating
function (mgf) can be relaxed to $\mathbb{E}[e^{\bs{\theta}^t
\mathbf{W}}]< \infty$ for $\|\bs{\theta}\|_2 \leq 2/K$, as can be
checked easily from the proof. As a more general remark, Arratia and
Baxendale showed recently for the univariate case that the existence
of a bounded coupling for $W$ assures the existence of the mgf
everywhere. See \cite{arratiabaxendale} for details. Although a
similar result can be given in a multivariate setting, we skip this
for now as in applications the underlying random variables are
almost always finite (so that mgf exists everywhere).

Noting that the $k=1$ case in Theorem \ref{sbmainthm} reduces to
standard size biasing and replacing $t$ by $t/ \sigma$, we arrive at
the following univariate corollary.

\begin{corollary}\label{univariatecase}
Let $W$ be a nonnegative random variable with finite and nonzero
mean, and assume that the moment generating function of $W$ exists
everywhere. If there exists a size biased coupling $W^s$ of
 $W$ satisfying $|W^s-W| \leq K$ for some
$K>0$, then for any $t \geq 0$, we have
\begin{equation}\label{ltcor}
\mathbb{P}(W-\mu \leq -t ) \leq \exp\left(- \frac{t^2}{4K \mu}
\right) \quad \text{and} \quad \mathbb{P}(W -\mu \geq t) \leq \exp
\left( -\frac{t^2}{4K \mu + Kt} \right).
\end{equation}
\end{corollary}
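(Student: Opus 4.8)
The plan is to derive Corollary~\ref{univariatecase} as the special case $k=1$ of Theorem~\ref{sbmainthm}. When $k=1$, the notion of a size biased distribution in direction $i$ in \eqref{sizebiasdefn1} collapses to the usual univariate size biasing $\mathbb{E}[Wf(W)] = \mu\,\mathbb{E}[f(W^s)]$, and the hypothesis $\|\mathbf{W}^i - \mathbf{W}\|_2 \le K$ becomes $|W^s - W| \le K$, exactly the assumption of the corollary. Similarly, the relaxed mgf hypothesis noted after the theorem, $\mathbb{E}[e^{\theta W}] < \infty$ for $|\theta| \le 2/K$, is implied by the stated assumption that the mgf of $W$ exists everywhere.

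Next I would simply track the constants. In the $k=1$ setting we have $\sigma_{(1)} = \sigma_1 = \sigma$, $\bs{\mu}/\bs{\sigma} = \mu/\sigma$, and hence $\|\bs{\mu}/\bs{\sigma}\|_2 = \mu/\sigma$. Therefore
\begin{equation*}
K_1 = \frac{2K}{\sigma}\cdot\frac{\mu}{\sigma} = \frac{2K\mu}{\sigma^2}, \qquad K_2 = \frac{K}{2\sigma}.
\end{equation*}
Plugging these into \eqref{sblefttail} and \eqref{sbrighttail}, and writing the single coordinate of $\mathbf{t}$ as $s \ge 0$ so that $\|\mathbf{t}\|_2 = s$, the left tail bound reads $\mathbb{P}\big((W-\mu)/\sigma \le -s\big) \le \exp\big(-s^2/(2K_1)\big) = \exp\big(-s^2\sigma^2/(4K\mu)\big)$ and the right tail bound reads $\mathbb{P}\big((W-\mu)/\sigma \ge s\big) \le \exp\big(-s^2/(2(K_1+K_2 s))\big)$.

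Finally I would perform the substitution $t = s\sigma$, i.e. replace $s$ by $t/\sigma$, as indicated in the sentence preceding the corollary. The left tail becomes $\mathbb{P}(W-\mu \le -t) \le \exp\big(-(t/\sigma)^2\sigma^2/(4K\mu)\big) = \exp\big(-t^2/(4K\mu)\big)$. For the right tail, $2(K_1 + K_2 s) = 2\big(\frac{2K\mu}{\sigma^2} + \frac{K}{2\sigma}\cdot\frac{t}{\sigma}\big) = \frac{4K\mu}{\sigma^2} + \frac{Kt}{\sigma^2} = \frac{4K\mu + Kt}{\sigma^2}$, so the exponent is $-(t/\sigma)^2 \big/ \big(\frac{4K\mu + Kt}{\sigma^2}\big) = -t^2/(4K\mu + Kt)$, giving $\mathbb{P}(W-\mu \ge t) \le \exp\big(-t^2/(4K\mu + Kt)\big)$. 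This is exactly \eqref{ltcor}, so nothing further is needed. There is essentially no obstacle here: the corollary is a pure specialization and bookkeeping exercise, and the only thing to be careful about is the order of the two reductions ($k=1$ first, then the rescaling $t \mapsto t/\sigma$) and the arithmetic simplification of $K_1$ and $K_2$.
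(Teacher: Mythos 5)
Your proposal is correct and is exactly the paper's own derivation: the paper obtains the corollary from Theorem \ref{sbmainthm} by noting that the $k=1$ case reduces to standard size biasing and replacing $t$ by $t/\sigma$, and your computation of $K_1 = 2K\mu/\sigma^2$, $K_2 = K/(2\sigma)$ and the subsequent cancellation of $\sigma$ carries this out faithfully. Nothing is missing.
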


\begin{remark}
For the one dimensional case, the lower tail inequality in
(\ref{ltcor}) improves Ghosh and Goldstein's corresponding result
(namely, inequality (1) in \cite{ghosh1}) by removing the
monotonicity condition. 
However, in both tails the constants are slightly worse than the
ones in their theorem, but this is not too surprising  as our main
result is proven for a multivariate version. We note that this
monotonicity condition is also discussed in two recent papers,
\cite{arratiabaxendale} and \cite{BaGoU13}, where they prove that it
is indeed possible to remove the monotonicity condition while
keeping the bound exactly the same as in \cite{ghosh1}.
\end{remark}

\begin{remark}
For the upper tail in univariate case, there has been a recent
improvement in \cite{arratiabaxendale} where the authors show that
it is indeed possible to obtain a tail behavior of order $\exp(-c t
\log t)$ under bounded size biased coupling assumption. In
particular, this result reveals the upper tail inequality given in
\cite{ghosh1} as a corollary. However, we were not able to obtain a
similar bound for the multivariate case yet, and this will be one
direction to follow in a subsequent work.
\end{remark}

In the rest of this section we briefly review the discussion in
\cite{ghosh2} which gives a procedure to size bias a collection of
nonnegative random variables in a given direction. More on
construction of size biased couplings can be found in
\cite{goldstein1}. Now, as mentioned in the introduction, for a
random vector $\mathbf{W}=(W_1,W_2,...,W_k)$ with nonnegative
coordinates, a random variable $\mathbf{W}^i$ is said to have
$\mathbf{W}$ size bias distribution in direction $i$ if
$\mathbb{E}[W_i f(\mathbf{W})] = \mu_i \mathbb{E}[f(\mathbf{W}^i)]$
for all functions for which these expectations exist. It is well
known that the definition just given is equivalent to the following
one.

\begin{definition}
Let $\mathbf{W}=(W_1,W_2,...,W_k)$ be a  random vector where $W_j$'s
have finite, nonzero expectations $\mu_j = \mathbb{E}[W_j]$ and
joint distribution $dF(\mathbf{x})$. For $i \in \{1,2,...,k\}$, we
 say that $\mathbf{W}^i = (W_1^i,W_2^i,...,W_k^i)$ has the $\mathbf{W}$ size bias distribution
in direction $i$ if $\mathbf{W}^i$ has joint distribution
\begin{equation}\label{sizebiasdefn2}
dF^{i} (\mathbf{x}) = \frac{x_i dF(\mathbf{x})}{\mu_i}.
\end{equation}
\end{definition}

Note that in univariate case, (\ref{sizebiasdefn2}) reduces to
$dF^{*} (x) = x dF(x)/\mu$ which explains the name, \emph{size
biased distribution}. Also this latter definition gives insight for
a way to construct size biased random variables. Following
\cite{ghosh2}, by the factorization of $dF(\mathbf{x})$, we have
\begin{eqnarray*}
 dF^i(\mathbf{x}) = \frac{x_i dF(\mathbf{x})}{\mu_i}  &=&\mathbb{P}(\mathbf{W} \in d\mathbf{x} | W_i = x)  \frac{x_i  \mathbb{P}(W_i \in dx)}{\mu_i}
    = \mathbb{P}(\mathbf{W} \in d\mathbf{x} |W_i =x) \mathbb{P}(W_i^i \in    dx).
\end{eqnarray*}

\noindent where $W_i^i$ has $W_i$ size biased distribution. Hence,
to generate $\mathbf{W}^i$ with distribution $dF^i$, first generate
a variable $W_i^i$ with $W_i$ size bias distribution. Then, when
$W_i^i = x$, we generate the remaining variables according to their
original conditional distribution given that $i^{th}$ coordinate
takes on the value $x.$

As an example, the construction just described combined with Theorem
\ref{sbmainthm} can be used   to prove concentration bounds for
random vectors with independent coordinates. To see this in the
simplest possible case, let $\mathbf{W}=(W_1,...,W_k)$ be a random
vector where $W_i$'s are
 nonnegative, independent and identically distributed  random
variables with  $W_i \leq K$ a.s. for some $K>0$, and assume that $0
< \sigma^2 = Var(W_1) < \infty$. To obtain $\mathbf{W}^i$, we let
$W_i^i$ be on the same space with $W_i$ size biased distribution and
also set $W_j^i = W_j$ for $j \neq i$. Since coordinates of
$\mathbf{W}$ are independent, $\mathbf{W}^i=(W_1^i,W_2^i,...,W_k^i)$
has $\mathbf{W}$ size biased distribution in direction $i.$ Also
noting that $W_i^i \leq K$ as support of $W_i^i$ is a subset of the
support of $W_i$, we obtain $\|\mathbf{W}^i - \mathbf{W}\|_2 \leq K$
a.s. and using Theorem \ref{sbmainthm}, one can conclude that the
lower tail inequality $$\mathbb{P} \left(\frac{\mathbf{W}-
\bs{\mu}}{\bs{\sigma} } \preceq -\mathbf{t} \right) \leq \exp
\left(-\frac{\sigma^2 \|\mathbf{t}\|_2^2}{4 K \sqrt{k} \mu} \right)
$$ and the upper tail inequality $$ \mathbb{P} \left(\frac{\mathbf{W}- \bs{\mu}}{\bs{\sigma}} \succeq
\mathbf{t} \right) \leq \exp \left(- \frac{\|\mathbf{t}\|_2^2}{ 4 K
\sqrt{k} \mu / \sigma^2 + K \|\mathbf{t}\|_2 /\sigma} \right)$$ hold
for all $\mathbf{t} \succeq \mathbf{0}.$

\section{Proofs}\label{proofs}

Before we begin the proofs, we note the following inequality
\bea\label{convexexp2} |e^{y} - e^{x}| \leq |y-x| \left(\frac{e^{y}
+ e^{x}}{2}\right) \ena which follows from the following observation
\beas\label{convexexp} \frac{e^y - e^x}{y-x} = \int_0^1 e^{t y
+(1-t) x} dt \leq \int_0^1 (t e^y + (1-t) e^x) dt= \frac{e^y +
e^x}{2} \qmq{for all $x \neq y$.} \enas

\textbf{Proof of Theorem \ref{sbmainthm}.} We first prove the upper
tail inequality. Let $\bs{\theta} \succeq \mathbf{0} =
(0,0,...,0)\in \mathbb{R}^k$ with $\|\bs{\theta}\|_2 < 2 /K$. Note
that an application of (\ref{convexexp2}) and Cauchy-Schwarz
inequality gives for any $i=1,...,k$
\begin{eqnarray*}     
                    \mathbb{E}[e^{\bs{\theta}^t \mathbf{W}^i}] - \mathbb{E}[e^{\bs{\theta}^t \mathbf{W}}] \leq |\mathbb{E}[e^{\bs{\theta}^t \mathbf{W}^i}] - \mathbb{E}[e^{\bs{\theta}^t
                                                                 \mathbf{W}}]|
                                                                  &\leq&  \mathbb{E} \left[\frac{|\bs{\theta}^t (\mathbf{W}^i - \mathbf{W})| (e^{\bs{\theta}^t \mathbf{W}^i}+ e^{\bs{\theta}^t \mathbf{W}})}{2} \right]\\
                                                                  &\leq& \mathbb{E}\left[\frac{\|\bs{\theta}\|_2 \|\mathbf{W}^i -\mathbf{W}\|_2 (e^{\bs{\theta}^t \mathbf{W}^i} + e^{\bs{\theta}^t \mathbf{W}})}{2} \right] \\
                                                                  &\leq&
                                                                  \frac{K \|\bs{\theta}\|_2}{2} \mathbb{E}[e^{\bs{\theta}^t \mathbf{W}^i} + e^{\bs{\theta}^t
                                                                  \mathbf{W}}].
                                                               \end{eqnarray*}

Changing sides, since $\|\bs{\theta}\|_2 < 2/K,$ we obtain
\begin{equation}\label{utest1}
\mathbb{E}[e^{\bs{\theta}^t \mathbf{W}^i}] \leq \frac{1+ \frac{K
\|\bs{\theta}\|_2}{2}}{1- \frac{K \|\bs{\theta}\|_2}{2}}
\mathbb{E}[e^{\bs{\theta}^t \mathbf{W}}].
\end{equation}

\noindent Letting $m(\bs{\theta})=\mathbb{E}[e^{\bs{\theta}^t
\mathbf{W}}]$, using (\ref{utest1}) and the size bias relation in
(\ref{sizebiasdefn1}) we have
\begin{equation}\label{mestim}
    \frac{\partial m(\bs{\theta})}{\partial \theta_i} = \mathbb{E}[W_i e^{\bs{\theta}^t
    \mathbf{W}}] = \mu_i \mathbb{E}[e^{\bs{\theta}^t \mathbf{W}^i}]
    \leq \mu_i \frac{1+ \frac{K \|\bs{\theta}\|_2}{2}}{1- \frac{K
\|\bs{\theta}\|_2}{2}} \mathbb{E}[e^{\bs{\theta}^t \mathbf{W}}] =
\mu_i \frac{2+ K \|\bs{\theta}\|_2}{2- K \|\bs{\theta}\|_2}
m(\bs{\theta}).
\end{equation}

Now, letting $M(\bs{\theta}) = \mathbb{E}\left[\exp
\left(\bs{\theta}^t
\left(\frac{\mathbf{W}-\bs{\mu}}{\bs{\sigma}}\right)\right)\right],$
 observe that we have $ M(\bs{\theta}) =
m\left(\frac{\bs{\theta}}{\bs{\sigma}}\right)\exp
\left(-\bs{\theta}^t \frac{\bs{\mu}}{\bs{\sigma}}\right). $
 Hence denoting $$\partial_i m(\bs{\beta}) = \frac{\partial m(\bs{\theta})}{\partial \theta_i} \big|_{\bs{\theta} = \bs{\beta}} \; ,$$
 we obtain for $\|\bs{\theta}/\bs{\sigma}\|_2 < 2/K$,
\begin{eqnarray*}
  \frac{\partial M(\bs{\theta})}{\partial \theta_i} &=& \frac{1}{\sigma_i}
  \partial_i m \left(\frac{\bs{\theta}}{\bs{\sigma}} \right)\exp\left(-\theta^t
\frac{\bs{\mu}}{\bs{\sigma}}\right) - \frac{\mu_i}{\sigma_i} m
\left(\frac{\bs{\theta}}{\bs{\sigma}} \right)\exp
\left(-\bs{\theta}^t \frac{\bs{\mu}}{
\bs{\sigma}}\right)\\
    &\leq& \frac{\mu_i}{\sigma_i} \left(\frac{2+K \|\bs{\theta} / \bs{\sigma}\|_2}{2-K \|\bs{\theta} / \bs{\sigma}\|_2} \right) m \left(\frac{\bs{\theta}}{\bs{\sigma}}\right) \exp \left(- \bs{\theta}^t \frac{\bs{\mu}}{\bs{\sigma}} \right) - \frac{\mu_i}{\sigma_i} m \left(\frac{\bs{\theta}}{\bs{\sigma}} \right) \exp \left(- \bs{\theta}^t \frac{\bs{\mu}}{\bs{\sigma}} \right)\\
   &=& \frac{\mu_i}{\sigma_i} M(\bs{\theta}) \left(\frac{2 + K \|\bs{\theta} / \bs{\sigma}\|_2}{ 2 - K \|\bs{\theta}/ \bs{\sigma}\|_2} -1
   \right) \\
   &=& \frac{\mu_i}{\sigma_i} M(\bs{\theta}) \left(\frac{2K \|\bs{\theta}/ \bs{\sigma}\|_2}{2 - K \|\bs{\theta} /
   \bs{\sigma}\|_2}\right).
\end{eqnarray*}

This in particular gives for $\|\bs{\theta} / \bs{\sigma}\|_2 < 2
/K$, $$\frac{\partial \log M(\bs{\theta})}{\partial \theta_i} \leq
\frac{\mu_i}{\sigma_i} \frac{2K \|\bs{\theta}/ \bs{\sigma}\|_2}{2 -
K \|\bs{\theta} /
   \bs{\sigma}\|_2}.$$
Now, using the mean value theorem, for all $\mathbf{0} \preceq
\bs{\theta} \in \mathbb{R}^k$ with $\| \bs{\theta} / \bs{\sigma}
\|_2 < 2/K,$
$$\log (M(\bs{\theta})) = \nabla \log (M(\mathbf{z})) \cdot \bs{\theta}, $$ for some $0 \preceq \mathbf{z} \preceq
\bs{\theta}.$ Noting that  $\|\mathbf{z} / \bs{\sigma}\|_2 \leq
\|\bs{\theta} / \bs{\sigma}\|_2 < 2/K$ and using Cauchy-Schwarz
inequality, we obtain
\begin{eqnarray}\label{logboundright}
  \log M(\theta) = \nabla \log M(\mathbf{z}) \cdot \bs{\theta} &\leq& \sum_{i=1}^k  \frac{2K \|\mathbf{z}/ \bs{\sigma}\|_2}{2 - K \|\mathbf{z} /
   \bs{\sigma}\|_2} \frac{\mu_i}{\sigma_i} \theta_i
    \leq \frac{2K \|\bs{\theta}/ \bs{\sigma}\|_2}{2 - K \|\bs{\theta} /
    \bs{\sigma}\|_2} \left\|\frac{\bs{\mu}}{\bs{\sigma}} \right\|_2 \|\bs{\theta}\|_2
\end{eqnarray}
Next we observe that $$\|\bs{\theta}\|_2 < \frac{1}{K_2} \implies
\left\|\frac{\bs{\theta}}{\bs{\sigma}}\right\|_2 < \frac{2}{K}.$$
Thus if $\|\bs{\theta}\|_2 < 1 / K_2,$ (\ref{logboundright}) yields
$$\log M(\bs{\theta}) \leq \left\|\frac{\bs{\mu}}{\bs{\sigma}}
\right\|_2 \frac{2 K \|\bs{\theta}\|_2^2/\sigma_{(1)}}{(2 - K
\|\bs{\theta}\|_2 / \sigma_{(1)})} = \frac{K_1
\|\bs{\theta}\|_2^2}{2 (1-K_2\|\bs{\theta}\|_2)}.$$ Hence if
$\mathbf{t} \succeq 0$ and $\|\bs{\theta}\|_2 < 1/K_2$, an
application of Markov's inequality yields
\begin{eqnarray*}
  \mathbb{P}\left(\frac{\mathbf{W}-\bs{\mu}}{\bs{\sigma}} \succeq \mathbf{t} \right) \leq \mathbb{P} \left(\bs{\theta}^t \left(\frac{\mathbf{W}-\bs{\mu}}{\bs{\sigma}} \right)
\geq \bs{\theta}^t \mathbf{t} \right) &\leq& \exp
\left(-\bs{\theta}^t
\mathbf{t}\right) M(\bs{\theta})  \\
   &\leq& \exp \left(-\bs{\theta}^t \mathbf{t} +
\frac{K_1 \|\bs{\theta}\|_2^2}{2 (1 - K_2 \|\bs{\theta}\|_2) }
\right)
\end{eqnarray*}

Using $\bs{\theta} =\frac{\mathbf{t}}{K_1+K_2 \|\mathbf{t}\|_2}
\succeq \mathbf{0}$, and noting that $\|\bs{\theta}\|_2<1/K_2$, we
finish the proof of the upper tail inequality.

\vspace{0.1in}

Next we prove the lower tail bound given in (\ref{sblefttail}).
Letting $\bs{\theta} \preceq \mathbf{0}$ and using the size bias
relation given in (\ref{sizebiasdefn1}), we have
$$\frac{\partial m (\bs{\theta})}{\partial \theta_i} = \mathbb{E}[W_i
e^{\bs{\theta}^t \mathbf{W}}] = \mu_i
\mathbb{E}[e^{\mathbf{\bs{\theta}}^t \mathbf{W}^i}]=\mu_i
\mathbb{E}[e^{\mathbf{\bs{\theta}}^t (\mathbf{W}^i - \mathbf{W})}
e^{\bs{\theta}^t \mathbf{W}}].$$ Using the inequality $e^x \geq 1 +
x$, this yields
\begin{equation}\label{ltpartest}
\frac{\partial m}{\partial \theta_i} \geq \mu_i \mathbb{E}[(1 +
\bs{\theta}^t (\mathbf{W}^i-\mathbf{W}))e^{\bs{\theta}^t
\mathbf{W}}].
\end{equation}
 By Cauchy-Schwarz inequality and that $\|\mathbf{W}^i - \mathbf{W}\|_2 \leq K$, we have $$|\bs{\theta}^t
(\mathbf{W}^i-\mathbf{W})| \leq \|\bs{\theta}\|_2
\|\mathbf{W}^i-\mathbf{W}\|_2 \leq K \|\bs{\theta}\|_2$$ which in
particular gives $\bs{\theta}^t (\mathbf{W}^i-\mathbf{W}) \geq -K
\|\bs{\theta}\|_2$. Combining this observation with
(\ref{ltpartest}), we arrive at

\begin{equation}\label{ltpartest2}
\frac{\partial m}{\partial \theta_i} \geq \mu_i \mathbb{E}[(1-K
\|\bs{\theta}\|_2) e^{\bs{\theta}^t \mathbf{W}}]= \mu_i (1-K
\|\bs{\theta}\|_2) m(\bs{\theta}).
\end{equation}
Now, keeping the notations as in the upper tail case and using the
estimate in (\ref{ltpartest2}), we get
\begin{eqnarray*}
  \frac{\partial M }{\partial \theta_i} &=& \frac{1}{\sigma_i}
  \partial_i m \left(\frac{\bs{\theta}}{\bs{\sigma}} \right)\exp\left(-\theta^t
\frac{\bs{\mu}}{\bs{\sigma}}\right) - \frac{\mu_i}{\sigma_i} m
\left(\frac{\bs{\theta}}{\bs{\sigma}} \right)\exp
\left(-\bs{\theta}^t \frac{\bs{\mu}}{
\bs{\sigma}}\right)\\
&=& \frac{1}{\sigma_i} \exp \left(-\bs{\theta}^t \frac{\bs{\mu}}{
\bs{\sigma}}\right) \left(\partial_i
m\left(\frac{\bs{\theta}}{\bs{\sigma}} \right)-\mu_i
m\left(\frac{\bs{\theta}}{\bs{\sigma}} \right) \right) \\
&\geq& \frac{1}{\sigma_i} \exp \left(-\bs{\theta}^t \frac{\bs{\mu}}{
\bs{\sigma}}\right) \left\{\mu_i \left(1-K
\left\|\frac{\bs{\theta}}{\bs{\sigma}}\right\|_2 \right)
m\left(\frac{\bs{\theta}}{\bs{\sigma}} \right) - \mu_i
m\left(\frac{\bs{\theta}}{\bs{\sigma}} \right) \right\}
\end{eqnarray*}
Manipulating the terms in the lower bound, this yields
\begin{eqnarray*}\frac{\partial M }{\partial \theta_i} &=&
\frac{1}{\sigma_i} \exp \left(-\bs{\theta}^t \frac{\bs{\mu}}{
\bs{\sigma}}\right) \left(-\mu_i K
\left\|\frac{\bs{\theta}}{\bs{\sigma}} \right\|_2 m
\left(\frac{\bs{\theta}}{\bs{\sigma}} \right) \right) \\
&=& - \frac{\mu_i}{\sigma_i} K
\left\|\frac{\bs{\theta}}{\bs{\sigma}} \right\|_2 M(\bs{\theta})
\\
&\geq&  -\frac{\mu_i}{\sigma_i}
\frac{K}{\sigma_{(1)}}\|\bs{\theta}\|_2 M(\bs{\theta}).
  \end{eqnarray*}
Now, using the mean value theorem, for $\bs{\theta} \preceq
  \mathbf{0}$, one can find $\bs{\theta} \preceq \mathbf{z} \preceq \mathbf{0}$
  such  that $$\log M(\bs{\theta})= \nabla \log M(\mathbf{z}) \cdot
  \bs{\theta}.$$
Hence for a given $\bs{\theta} \preceq \mathbf{0}$, we have
\begin{eqnarray}\label{logMest}
            \log M(\bs{\theta})= \nabla \log M(\mathbf{z}) \cdot \bs{\theta}
            &\leq& \sum_{i=1}^k \left(\left(\frac{-K \mu_i \|\bs{\theta}\|_2}{\sigma_{(1)} \sigma_i} \right)  \theta_i\right)
\end{eqnarray}
where we used that $\theta_i\leq 0$ for each $i$ for the
inequalities. Now, using (\ref{logMest}) and an application of
Cauchy-Schwarz inequality gives
\begin{eqnarray*}
          \log M(\bs{\theta})&\leq& \frac{K \|\bs{\theta}\|_2}{\sigma_{(1)}}
          \sum_{i=1}^k \frac{\mu_i |\theta_i|}{\sigma_i}
          \leq  \frac{K}{\sigma_{(1)}} \left\|\frac{\bs{\mu}}{\bs{\sigma}}\right\|_2
          \|\bs{\theta}\|_2^2.
\end{eqnarray*}
which after exponentiation yields $$M(\bs{\theta}) \leq
\exp\left(\frac{K}{\sigma_{(1)}}
\left\|\frac{\bs{\mu}}{\bs{\sigma}}\right\|_2
          \|\bs{\theta}\|_2^2\right).$$ Combining this last observation with Markov's inequality,
we arrive at
\begin{eqnarray*}
  \mathbb{P} \left(\frac{\mathbf{W}- \bs{\mu}}{\bs{\sigma}} \preceq -\mathbf{t} \right) = \mathbb{P}\left(\bs{\theta}^t \left( \frac{\mathbf{W}-\bs{\mu}}{\bs{\sigma}} \right) \geq \bs{\theta}^t \mathbf{t} \right) \leq \exp \left(-\bs{\theta}^t \mathbf{t} + \frac{K}{\sigma_{(1)}} \left\|\frac{\bs{\mu}}{\bs{\sigma}} \right\|_2 \|\bs{\theta}\|_2^2
    \right).
\end{eqnarray*}

Substituting $\bs{\theta}=
\frac{-\mathbf{t}}{2\frac{K}{\sigma_{(1)}}\left\|\frac{\bs{\mu}}{\bs{\sigma}}
\right\|_2} \preceq \mathbf{0}$, result follows.  \hfill $\square$

\section{Two applications}\label{appssection}

In this section, we will discuss two applications of Theorem
\ref{sbmainthm} which will be on joint distributions of (1) locally
dependent random variables and (2) the number of patterns in
uniformly random permutations.

\subsection{Local dependence}

Now we show that our results above can be used to obtain
concentration bounds for a random vector
$\mathbf{W}=(W_1,W_2,...,W_k)$ with nonnegative coordinates that are
functions of a subset of a collection of independent random
variables. First part of the following lemma was used in
\cite{ghosh2} for univariate concentration results.

\begin{lemma}\label{localdepconst}
Let $\mathcal{V}=\{1,2,...,k\}$ and $\{C_v, v \in \mathcal{V}\}$ be
a collection of independent random variables, and for each $i \in
\mathcal{V}$, let $\mathcal{V}_{i} \subset \mathcal{V}$ and $W_i =
W_i(C_v, v \in \mathcal{V}_i)$ be a nonnegative random variable with
nonzero and finite mean.

\begin{itemize}
  \item[i.] \cite{ghosh2} If $\{C_v^i, v \in \mathcal{V}_i\}$ has distribution
$$dF^i(c_v, v \in \mathcal{V}_i) = \frac{W_i(c_v, v \in
\mathcal{V}_i)}{\mathbb{E}[W_i(C_v, v \in \mathcal{V}_i)]} dF(c_v,
v\in \mathcal{V}_i)$$ and is independent of $\{C_v, v \in
\mathcal{V}\}$, letting $$W_j^i = W_j(C_v^i, v \in \mathcal{V}_j
\cap \mathcal{V}_i, C_u, u  \in \mathcal{V}_j \cap \mathcal{V}_i^c
),$$ the collection $\mathbf{W}^i = \{W_j^i, j \in \mathcal{V}\}$
has the $\mathbf{W}$ size biased distribution in direction $i$.
  \item[ii.] Further if we assume that $W_i \leq M$ for each $i$, then we have
$$\|\mathbf{W}^i - \mathbf{W}\|_2 \leq \sqrt{b} M$$ where $b= \max_{i} |\{j : \mathcal{V}_j \cap \mathcal{V}_i \neq \emptyset\}|.$
\end{itemize}
\end{lemma}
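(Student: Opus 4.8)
\textbf{Proof plan for Lemma \ref{localdepconst}(ii).}

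The plan is to take the explicit construction from part (i) as given and simply bound the Euclidean distance $\|\mathbf{W}^i - \mathbf{W}\|_2$ coordinate by coordinate. Recall that in the construction, $W_j^i = W_j(C_v^i, v \in \mathcal{V}_j \cap \mathcal{V}_i, C_u, u \in \mathcal{V}_j \cap \mathcal{V}_i^c)$, whereas $W_j = W_j(C_v, v \in \mathcal{V}_j)$. The key observation is that the $i$-th construction only re-samples the variables $C_v$ with $v \in \mathcal{V}_i$; for any index $j$ with $\mathcal{V}_j \cap \mathcal{V}_i = \emptyset$, every argument of $W_j^i$ coincides with the corresponding argument of $W_j$, so $W_j^i = W_j$ and that coordinate contributes $0$ to the sum $\|\mathbf{W}^i - \mathbf{W}\|_2^2 = \sum_{j=1}^k (W_j^i - W_j)^2$.

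First I would therefore restrict the sum to the set $S_i = \{j : \mathcal{V}_j \cap \mathcal{V}_i \neq \emptyset\}$, whose cardinality is at most $b$ by definition. Next, for each $j \in S_i$, I would bound $(W_j^i - W_j)^2$ crudely: since both $W_j$ and $W_j^i$ are nonnegative and bounded by $M$ (the latter because $W_j^i$ is a value of the function $W_j$ evaluated at arguments whose joint law is absolutely continuous with respect to the original joint law of $(C_v, v \in \mathcal{V}_j)$, hence lies in the support and is $\le M$ a.s.), we have $|W_j^i - W_j| \le M$ and thus $(W_j^i - W_j)^2 \le M^2$. Summing over the at most $b$ nonzero terms gives $\|\mathbf{W}^i - \mathbf{W}\|_2^2 \le b M^2$, and taking square roots yields $\|\mathbf{W}^i - \mathbf{W}\|_2 \le \sqrt{b}\, M$ almost surely, as claimed.

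There is no serious obstacle here; the only point requiring a little care is the justification that $W_j^i \le M$ almost surely, i.e.\ that re-sampling a sub-collection of the $C_v$'s according to the size-biased density $dF^i$ (which is absolutely continuous with respect to the original product law and independent of the untouched coordinates) does not push the value of $W_j$ outside $[0,M]$. This follows because the joint distribution of the arguments fed into $W_j$ in the construction is absolutely continuous with respect to their original joint distribution, so any event of original probability zero — in particular $\{W_j > M\}$ — still has probability zero under the new law. Everything else is the elementary inequality $\sum_{j \in S_i}(W_j^i-W_j)^2 \le |S_i| M^2 \le bM^2$.
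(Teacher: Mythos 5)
Your argument is correct and coincides with the paper's proof of part (ii): restrict the sum $\sum_j (W_j^i-W_j)^2$ to the at most $b$ indices with $\mathcal{V}_j\cap\mathcal{V}_i\ne\emptyset$ (the others contribute $0$ since $W_j^i=W_j$), bound each surviving term by $M^2$, and take square roots. The only difference is that you spell out, via absolute continuity of the size-biased law of $\{C_v^i\}$ with respect to the original, why $W_j^i\le M$ a.s.; the paper leaves that point implicit.
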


\begin{proof}
Proof of the fact that $\mathbf{W}^i = \{W_j^i, j \in \mathcal{V}\}$
has the $\mathbf{W}$ size biased distribution in direction $i$ can
be found in \cite{ghosh2}. For the second part, we note that by the
construction in the statement, we have $W_j = W_j^i$ whenever
$\mathcal{V}_j \cap \mathcal{V}_i = \emptyset.$ Thus,
$$\|\mathbf{W}^i - \mathbf{W}\|_2 = \left(\sum_{j=1}^k |W_j^i -
W_j|^2 \right)^{1/2} \leq (M^2 \max_{i} |\{j : \mathcal{V}_j \cap
\mathcal{V}_i \neq \emptyset\}|)^{1/2}= \sqrt{b} M.$$
\end{proof}
In conclusion, we note that in the case of local dependence as
described above, we can use Theorem \ref{sbmainthm} to obtain
concentration bounds for $\mathbf{W}=(W_1,W_2,...,W_k)$ with
$K=\sqrt{b}M$. This provides a natural generalization to the
argument given in Section \ref{mainsection} for vectors with
independent coordinates.

Ghosh and Goldstein \cite{ghosh2} also provides two specific
applications of this result on sliding $m$ window statistics and
local extrema on a lattice in a univariate setting. The discussion
above immediately yields multivariate concentration bounds for each
of these problems, but we do not include the details here as they
will be repetitions of the steps done in \cite{ghosh2}.

\subsection{Counting patterns}

Let $\tau_1,\tau_2,...,\tau_k \in S_m$ be $k$ distinct permutations
from $S_m,$ the permutation group on $m\geq 3$ elements. Also let
$\pi$ be a uniformly random permutation in $S_n$, where $n \geq m$
and set $\mathcal{V}=\{1,2,...,n\}$. Denoting $$\mathcal{V}_{s}=\{s,
s+1,...,s+m-1\} \quad \text{for} \; \, s \in \mathcal{V}$$ where
addition of elements of $\mathcal{V}$ is modulo $n$, we say the
pattern $\tau$ appears at location $s \in \mathcal{V}$ if the values
$\{\pi(v)\}_{v \in \mathcal{V}_{s}}$ and $\{\tau(v)\}_{v \in
\mathcal{V}_{1}}$ are in the same relative order. Equivalently, the
pattern $\tau$ appears at $s$ if and only if $\pi(\tau^{-1}(v) + s
-1), v \in \mathcal{V}_1$ is an increasing sequence. Our purpose
here is to prove concentration bounds using Theorem \ref{sbmainthm}
for the multivariate random variable $\mathbf{W}=(W_1,W_2,...,W_k)$
where $W_i$ counts the number of times pattern $\tau_i$ appears in
$\pi$. This problem was previously studied in \cite{ghosh2} for the
univariate case.

For $\tau \in S_m$, let $I_j(\tau)$ be the indicator that
$\tau(1),...,\tau(m-j)$ and $\tau(j+1),...,\tau(m)$ are in the same
relative order. Following the calculations in \cite{ghosh2}, for
$i=1,...,k$, we have

\begin{equation}\label{expinpattern}
    \mu_i = \mathbb{E}[W_i] = \frac{n}{m!}
\end{equation}
and
\begin{equation}\label{varinpattern}
    \sigma_i^2 = Var(W_i) = n \left(\frac{1}{m!} \left(1 -\frac{2m-1}{m!} \right)+2 \sum_{j=1}^{m-1} \frac{I_j(\tau_i)}{(m+j)!} \right)
\end{equation}

Now we are ready to give our main result.

\begin{theorem}
With the setting as above, if $\mathbf{W} = (W_1, W_2,...,W_k)$,
then the conclusions of Theorem \ref{sbmainthm} hold with mean and
variance as in (\ref{expinpattern}) and (\ref{varinpattern}), and
$$K_1 = \frac{2k(2m-1) m!}{m!-2m +2} \quad \text{and} \quad K_2 =
\frac{\sqrt{k}(2m-1) m!}{2 \sqrt{n (m! - 2m +1)}}.$$
\end{theorem}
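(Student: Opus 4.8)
The plan is to produce, for each direction $i\in\{1,\dots,k\}$, an explicit bounded size biased coupling $\mathbf{W}^i$ of $\mathbf{W}$, bound $\|\mathbf{W}^i-\mathbf{W}\|_2$ by a constant $K$, and then read off $K_1,K_2$ from Theorem~\ref{sbmainthm}. Write $W_i=\sum_{s\in\mathcal{V}}X_{i,s}$, where $X_{i,s}$ is the indicator that pattern $\tau_i$ appears at location $s$; by rotational symmetry of a uniform $\pi$ each $X_{i,s}$ has mean $1/m!$, consistent with $(\ref{expinpattern})$. Since $W_i$ is a sum of indicators with common mean, the standard size biasing recipe for such sums (as in \cite{ghosh2}) applies: choose $S$ uniformly in $\mathcal{V}$, independently of $\pi$, and let $\mathbf{W}^i$ be $\mathbf{W}$ recomputed from a permutation $\pi^i$ having the law of $\pi$ conditioned on $\{X_{i,S}=1\}$. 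Concretely, I would obtain $\pi^i$ from $\pi$ by rearranging, in place, the $m$ values $\{\pi(v):v\in\mathcal{V}_S\}$ into the unique order making $\{\pi^i(v)\}_{v\in\mathcal{V}_S}$ match the relative order of $\tau_i$, while keeping $\pi^i(v)=\pi(v)$ for $v\notin\mathcal{V}_S$. Because under a uniform $\pi$ the unordered set of values carried by $\mathcal{V}_S$ is independent of their arrangement, this $\pi^i$ has exactly the law $\mathcal{L}(\pi\mid X_{i,S}=1)$, and the recipe then gives that $\mathbf{W}^i$ has the $\mathbf{W}$ size biased distribution in direction $i$, i.e. satisfies $(\ref{sizebiasdefn1})$.

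Next I would bound the coupling distance. The permutations $\pi$ and $\pi^i$ differ only at the $m$ positions in $\mathcal{V}_S$. For a fixed pattern $\tau_j$, the indicator $X_{j,s}$ depends only on $\pi$ restricted to $\mathcal{V}_s$, hence can change between $\pi$ and $\pi^i$ only when $\mathcal{V}_s\cap\mathcal{V}_S\neq\emptyset$, which happens for at most $2m-1$ locations $s$ (those within cyclic distance $m-1$ of $S$), each such indicator changing by at most $1$. Therefore $|W_j^i-W_j|\le 2m-1$ for every $j$, and summing over the $k$ coordinates,
$$\|\mathbf{W}^i-\mathbf{W}\|_2=\Big(\sum_{j=1}^k|W_j^i-W_j|^2\Big)^{1/2}\le\sqrt{k}\,(2m-1)=:K.$$

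It then remains to convert $K$ into the constants of Theorem~\ref{sbmainthm}. Dropping the nonnegative terms $I_j(\tau_i)\ge 0$ in $(\ref{varinpattern})$ gives $\sigma_i^2\ge n(m!-2m+1)/(m!)^2$ for every $i$, hence $\sigma_{(1)}\ge\sqrt{n(m!-2m+1)}\,/\,m!$ and, using $\mu_i=n/m!$, also $\mu_i/\sigma_i\le\sqrt{n/(m!-2m+1)}$, so that $\|\bs{\mu}/\bs{\sigma}\|_2\le\sqrt{kn/(m!-2m+1)}$. Substituting these estimates together with $K=\sqrt{k}(2m-1)$ into $K_1=(2K/\sigma_{(1)})\,\|\bs{\mu}/\bs{\sigma}\|_2$ and $K_2=K/(2\sigma_{(1)})$ from Theorem~\ref{sbmainthm} yields the stated values of $K_1$ and $K_2$, and then $(\ref{sblefttail})$ and $(\ref{sbrighttail})$ hold verbatim.

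I expect the delicate point to be the verification that the in-place rearrangement coupling genuinely realizes the $\mathbf{W}$ size biased law in direction $i$: one must check that, conditionally on $S=s$, forcing $\tau_i$ at $s$ by permuting the values already present on $\mathcal{V}_s$ produces precisely $\mathcal{L}(\pi\mid X_{i,s}=1)$ (which boils down to independence of the value-set on $\mathcal{V}_s$ from its arrangement), and that averaging over uniform $S$ is the correct mixing because all the appearance probabilities $\mathbb{E}[X_{i,s}]$ are equal. The overlap count and the variance estimate are routine once this is in place; I would also remark on the harmless degenerate case $n<2m-1$, where every location is affected but there are only $n\le 2m-1$ of them, so the bound $K=\sqrt{k}(2m-1)$ still holds.
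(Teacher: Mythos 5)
Your proposal is correct and follows essentially the same route as the paper: the identical coupling (pick a location uniformly at random and reorder the values on that window in place to force the pattern $\tau_i$, the paper citing \cite{goldsteinpattern} for the size-bias property where you sketch the justification directly), the same overlap bound $\|\mathbf{W}^i-\mathbf{W}\|_2\le\sqrt{k}\,(2m-1)$, and the same variance lower bound obtained by dropping the $I_j$ terms. One small caveat: the substitution actually yields $K_1\le 2k(2m-1)\,m!/(m!-2m+1)$, so the denominator $m!-2m+2$ in the stated theorem looks like a typo (your derivation, like the paper's own, only justifies the $m!-2m+1$ version, while $K_2$ matches exactly).
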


\begin{proof}

Letting $\pi$ be a  uniformly random permutation in $S_n$, and
$X_{s,\tau}$ the indicator that $\tau$ appears at $s$,
$$X_{s,\tau}(\pi(v), v\in \mathcal{V}_{s}) =
\mathbbm{1}(\pi(\tau^{-1}(1)+s-1) < ... < \pi(\tau^{-1}(m)+s-1)),$$
the sum $W=\sum_{s \in \mathcal{V}} X_{s, \tau}$ counts the number
of $m-$element-long segments of $\pi$ that have the same relative
order as $\tau.$

Now let $\sigma_{s}$ the permutation  in $S_m$ so that
$$\pi(\sigma_{s}(1)+s-1) < ....<\pi(\sigma_{s}(m)+s-1)$$
and set

$$
\pi_1^{s}(v) =
\begin{cases}
\pi(\sigma_s (\tau_1(v- s +1)) + s -1), & \text{if }
v \in \mathcal{V}_{s} \\
\pi(v)  & \text{if }v \notin \mathcal{V}_{s}
\end{cases}
$$
In other words, $\pi_1^{s}$ is the permutation $\pi$ with the values
$\pi(v), v \in \mathcal{V}_{s}$ reordered so that
$\pi_1^{s}(\gamma)$ for $\gamma \in \mathcal{V}_{s}$ are in the same
relative order as $\tau_1.$ Similarly we can define
$\pi_{2}^{s},...,\pi_k^s$ corresponding to $\tau_2,...,\tau_k,$
respectively.

To obtain $\mathbf{W}^i$, the $\mathbf{W}$ size biased variate in
direction $i$ for $i=1,2,...,k$, pick an index $\beta$ uniformly
from $\{1,...,n\}$ and set $W_j^i = \sum_{s \in \mathcal{V}} X_{s,
\tau_j}(\pi_i^{\beta}).$ Then $\mathbf{W}^i=(W_1^i,W_2^i,...,W_k^i)
$ for $i=1,2,...,k.$ The fact that we indeed obtain the desired size
bias variates follows from results in \cite{goldsteinpattern}.

Since $\pi_1^{\beta}, \pi_2^{\beta},...,\pi_k^{\beta}$ agree with
$\pi$ on all the indices leaving out $\mathcal{V}_{\beta}$ and
$|\mathcal{V}_{\beta}|=m,$ we obtain $|W_j^i - W_j| \leq 2m-1$ for
$i,j =1,2,...,k.$ Hence, $\|\mathbf{W}^i-\mathbf{W}\|_2 \leq
\sqrt{k}(2m-1)$ for each $i\in \{1,2,...,k\}.$

Now recall from (\ref{varinpattern}) that $    \sigma_i^2 = n
\left(\frac{1}{m!} \left(1 -\frac{2m-1}{m!} \right)+2
\sum_{j=1}^{m-1} \frac{I_j(\tau_i)}{(m+j)!} \right)$ for
$i=1,2,...,k.$ Since $0 \leq I_j \leq 1$, one can obtain a variance
lower bound by setting $I_k=0$. In particular, this yields
$$\sigma_{(1)}^2 \geq \frac{n}{m!} \left(1-\frac{2m-1}{m!}
\right).$$ Since  the constants $K_1$ and $K_2$ in Theorem
\ref{sbmainthm} can be replaced by larger constants, result follows
from simple computations.
\end{proof}


\begin{thebibliography}{1}

\bibitem{arratiabaxendale} Arratia, R. and Baxendale, P., \emph{Bounded size bias coupling: a Gamma function bound, and universal Dickman-function
behavior}, preprint, http://arxiv.org/abs/1306.0157.

\bibitem{arratia} Arratia, R., Goldstein, L. and Kochman, F., \emph{Size bias for one and
all}, preprint, arxiv.org/abs/1308.2729.


\bibitem{BaGoU13} Bartroff, J., Goldstein, L., and I\c{s}lak, \"{U}.,
\emph{Bounded size biased couplings for log concave distributions
and concentration of measure for occupancy models}, 2013, preprint.


\bibitem{boucheron} Boucheron, S., Lugosi, G. and Massart, P.,
Concentration Inequalities: A Nonasymptotic Theory of Independence,
Oxford University Press, 2013.

\bibitem{brown} Brown, M., \emph{Exploiting the waiting time paradox:
applications of the sizebiasing transformation}, Probability in the
Engineering and Informational Sciences 20, 195-230, 2006.

  \bibitem{chat2} Chatterjee, Sourav, \emph{Stein's method for concentration
inequalities}, Probab. Theory Related Fields ,no. 1-2, 305-321.
2007.

\bibitem{chung} Chung, F. and Lu, L.
\emph{Concentration inequalities and martingale inequalities: a
survey.} Internet Math. 3, no. 1, 79-127, 2006.

  \bibitem{ghosh1} Ghosh, S., and Goldstein, L.,  \emph{Concentration of measures
via size biased couplings}, Probability Theory and Related Fields,
2011.

  \bibitem{ghosh2} Ghosh, S., and Goldstein, L.,  \emph{Applications of size
biased couplings for concentration of measures}, Electronic
Communications in Probability, 2011.


\bibitem{goldsteinpattern} Goldstein, L.,
\emph{Berry Esseen bounds for combinatorial central limit theorems
and Pattern Occurrences, using Zero and Size Biasing}, Jour. of
Appl. Probab,  42, pp. 661--683, 2005.

\bibitem{goldstein1} Goldstein, L. and Rinott, Y., \emph{Multivariate normal approximations
by Stein's method and size bias couplings}, J. Appl. Probab., 33(1),
1-17, 1996.

\bibitem{ledoux} Ledoux, M., The concentration of measure phenomenon, Amer.
Math. Soc., Providence, RI, 2001.

\bibitem{mcdiarmid} McDiarmid, Colin, \emph{Concentration}, Algorithms Combin., 16,
1998.

\bibitem{stein} Stein, C., \emph{A bound for the error in the normal approximation to the
distribution of a sum of dependent random variables},
 Proc. Sixth Berkeley Symp. Math. Statist. Probab., 2,
583-602, Univ. California Press, Berkeley, 1972.


\end{thebibliography}
\end{document}